\newcommand{\labbel}{\label}
\newtheorem{theorem}{Theorem}
\newtheorem{proposition}[theorem]{Proposition} 
\newtheorem{corollary}[theorem]{Corollary}
\newtheorem*{theorem*}{Theorem}
\newtheorem*{corollary*}{Corollary}
\newtheorem*{proposition*}{Proposition}
\theoremstyle{definition}
\newtheorem{definition}[theorem]{Definition}
\newtheorem*{definition*}{Definition}
\newtheorem*{disclaimer*}{Disclaimer}
\theoremstyle{remark}
\newcommand{\brfrt}{\hspace{0 pt}}
\DeclareMathAlphabet{\mathpzc}{OT1}{pzc}{m}{it}
\DeclareMathOperator{\Spec}{\mathbf{Spec}}
\DeclareMathOperator{\PSpec}{\mathbf{PSpec}}
\begin{document}
 
\title{Generalized Frol\'\i k classes}

\author{Paolo Lipparini} 
\address{Dipartimento di Matematica\\Viale della Ricerca Scientifrol\'\i ka\\II Universit\`a di Roma (Tor Vergata)\\I-00133 ROME ITALY}
\urladdr{http://www.mat.uniroma2.it/\textasciitilde lipparin}

\keywords{Generalized Frol\'\i k class; sequencewise 
$\mathcal F$-(pseudo)com\-pact\-ness; 
product of two topological spaces; spectrum of a sequence} 

\subjclass[2010]{Primary 54A20, 54B10, 54F65;  Secondary  54D20}
\thanks{Work performed under the aruspices of G.N.S.A.G.A}

\begin{abstract}
The  class 
$\mathfrak C $ relative to 
countably compact topological spaces and the class $\mathfrak P$
relative to  pseudocompact spaces
introduced by Z. Frol\'\i k  
are naturally generalized relative
to every topological property.
We provide a characterization
of such generalized Frol\'\i k  classes
in the broad case of properties
defined in terms of filter convergence.

If a class of spaces
can be defined in terms of
filter convergence, then the same is true for its  Frol\'\i k  class.
\end{abstract} 
 
\maketitle

Zden\v ek Frol\'\i k  \cite{Fr1} introduced   the class $\mathfrak C $ 
of all topological spaces $X$ such that $X \times Y$
is countably compact, for every countably compact $Y$, 
and \cite{Fr2}   the class $\mathfrak P$
of all topological spaces $X$ such that $X \times Y$
is pseudocompact, for every pseudocompact $Y$.
Hence it is natural to define, for every class $\mathcal K$
of topological spaces, the \emph{(generalized) $\mathcal K$-Frol\'\i k class}
$\mathfrak F(\mathcal K)$,
consisting 
of all  topological spaces $X$ such that $X \times Y \in \mathcal K $,
whenever  $Y \in \mathcal K$.
If $X \in \mathfrak F(\mathcal K)$, we shall sometimes simply say that 
\emph{$X$ is $\mathcal K$-Frol\'\i k} and, when convenient and  with some sloppiness,
we shall identify a topological property with the class of spaces satisfying it.
Throughout the present note, ``space'' will be synonymous  of 
``topological space''. 

Notice that Frol\'\i k  \cite{Fr1,Fr2} assumed some separation axiom in the definitions 
of $\mathfrak C $ and $\mathfrak P $; however, in the present note,
no separation axiom is necessary. Of course, every definition  given here
can  be considered relative only to those spaces satisfying 
some separation axiom. If the separation axiom 
in question is preserved under products, then
all theorems proved here remain valid.
See \cite{Vf1,Vf2} for results  and variations
 on  $\mathfrak C $ and $\mathfrak P $
without assuming separation axioms.

Classes analogous to $\mathfrak C $  and  $\mathfrak P$ have subsequently been introduced by other authors.
For example, in  the above terminology, the famous and now solved
Morita's first
conjecture
asserts that   $\mathfrak F$(normal) is the class of all discrete spaces.
See Atsuji \cite{A} and
Balogh \cite{Bal} for more details  and further references.
Another class that is largely studied is
the class of 
 \emph{productively
Lindel\"of spaces}, $\mathfrak F$(Lindel\"of) in the present terminology.
See, e.~g.,   Burton and Tall \cite{BT}. Notice that the ``class 
of spaces considered by Frol\'\i k''  discussed in Burton and Tall's paper
refers to still another class of spaces introduced by Frol\'\i k, seemingly unrelated to the
classes $\mathfrak C $ and $\mathfrak P $.

It is useful to introduce also a relative  notion.
If $\mathcal K$ and  $\mathcal H$
are classes of topological spaces,
  let us say that  
\emph{$X$ is $\mathcal K$-Frol\'\i k for $\mathcal H$-spaces}
if 
 $X \times Y \in \mathcal K $,
whenever  $Y \in \mathcal H$.
In particular, 
$\mathcal K$-Frol\'\i k is the same as
$\mathcal K$-Frol\'\i k
for $\mathcal K$-spaces.
Just to exemplify, the second now solved  Morita conjecture,
in the above terminology, asserts  that
metrizable spaces are exactly 
the normal-Frol\'\i k  spaces for normal $P$-spaces. 

\smallskip

We now turn to the kind of topological properties we shall
 consider here
in connection with generalized Frol\'\i k  classes.
Notions related to ultrafilter convergence
have frequently played an important role in the study of products.
See the surveys Garc\'\i a-Ferreira and Kocinac \cite{GFK},
Stephenson \cite{St} and Vaughan   \cite{Va} 
for  details and further references.

In \cite{sssr} we showed that if we extend some definitions by considering 
filters in place of ultrafilters we obtain
genuinely more general notions.
Hence we give here the general definitions;
however, if the reader wants, he or she might always suppose that all the filters
under consideration are ultrafilters, that is, maximal. 

Recall that if $X$ is a topological space 
and $F$ is a filter over some set $I$,
a sequence $( x_i) _{i \in I} $ of elements of 
$X$ is said to \emph{$F$-converge} to  $x \in X$
if $\{ i \in I \mid  x_i \in U\} \in D$,
for every neighborhood $U$ of $x$ in $X$.

\begin{definition} \labbel{spcpn}
\cite{sssr} Suppose that  $I$ is a set and $\mathcal F$ is a family of filters over $I$.
A topological space $X$  is
\emph{sequencewise $\mathcal F$-\brfrt compact} 
if, for every sequence 
$( x_i) _{i \in I} $ 
of elements of $X$, there is 
$F \in \mathcal F$ 
such that  
$( x_i) _{i \in I} $ 
$F$-converges in $X$.
The class of all 
sequencewise $\mathcal F$-\brfrt compact 
spaces shall be denoted by 
$\mathcal F c$.
See \cite{sssr} and references there
for some history about notions
related to sequencewise $\mathcal F$-\brfrt compactness.

If $Y$ is a topological space and $\mathbf y = ( y_i) _{i \in I} $ 
is a sequence of elements of $Y$,
 the \emph{spectrum  of $\mathbf y $ in $Y$}, in symbols,
$Spec(\mathbf y, Y)$, is the set of all filters $F$ over $I$ 
such that $\mathbf y $  $F$-converges in $Y$. 

If $\mathcal H$ is a class of topological spaces, 
the \emph{spectrum of $\mathcal H$ (relative to $I$)}
is the set
$\Spec_I (\mathcal H )=
\{Spec(\mathbf y, Y)  \mid Y \in \mathcal H 
\text{ and } \allowbreak \mathbf y  \allowbreak \text{ is an $I$-indexed}
\allowbreak \text{sequence
of elements of } Y \}$.  
 \end{definition}   

\begin{proposition} \labbel{swpc}
Suppose that  $\mathcal F$ is a set of filters over $I$.

A topological space $X$ is $\mathcal Fc$-Frol\'\i k  
if and only if $X$ is sequencewise $\mathcal F \cap \mathcal G$-\brfrt compact, 
for every  $\mathcal G \in  \Spec_I (\mathcal Fc)$.

More generally,
if  $\mathcal H$  is a class of topological spaces,
then a topological space $X$ is $\mathcal Fc$-Frol\'\i k
for $\mathcal H$-spaces  
if and only if $X$ is sequencewise 
$\mathcal F \cap \mathcal G$-\brfrt compact, 
for every  $\mathcal G \in  \Spec_I (\mathcal H)$.
 \end{proposition}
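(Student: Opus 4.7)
The plan is to prove the second, more general statement, from which the first follows as the special case $\mathcal H = \mathcal Fc$. The whole argument is an exercise in unpacking definitions; the substantive ingredient is the standard fact that a sequence $((x_i,y_i))_{i \in I}$ in the product $X \times Y$ $F$-converges to $(x,y)$ if and only if $(x_i)$ $F$-converges to $x$ in $X$ and $(y_i)$ $F$-converges to $y$ in $Y$. This follows at once from the product topology: basic open neighborhoods of $(x,y)$ have the form $U \times V$, and
$$\{ i \in I : (x_i, y_i) \in U \times V \} = \{ i \in I : x_i \in U \} \cap \{ i \in I : y_i \in V \},$$
so the defining condition for $F$-convergence transfers back and forth through filter-closure under intersection (and, for the reverse implication, under supersets applied to the two projections).

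With this in hand I would unfold the hypothesis. Saying that $X$ is $\mathcal Fc$-Frol\'\i k for $\mathcal H$-spaces means that $X \times Y \in \mathcal Fc$ for every $Y \in \mathcal H$, which in turn spells out as follows: for every $Y \in \mathcal H$, every $I$-indexed sequence $(y_i)$ in $Y$, and every $I$-indexed sequence $(x_i)$ in $X$, there is some $F \in \mathcal F$ to which the combined sequence in $X \times Y$ is $F$-convergent. By the product-convergence fact, this is equivalent to requiring $F \in \mathcal F$ such that $F \in Spec((y_i), Y)$ and $(x_i)$ is $F$-convergent in $X$, that is, $F \in \mathcal F \cap Spec((y_i), Y)$.

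The next step is to rearrange the quantifiers using the definition of $\Spec_I(\mathcal H)$. A set $\mathcal G$ of filters over $I$ equals $Spec((y_i), Y)$ for some $Y \in \mathcal H$ and some $I$-indexed sequence $(y_i)$ in $Y$ precisely when $\mathcal G \in \Spec_I(\mathcal H)$. Hence the condition extracted in the previous paragraph is equivalent to: for every $\mathcal G \in \Spec_I(\mathcal H)$ and every $I$-indexed sequence $(x_i)$ in $X$, there exists $F \in \mathcal F \cap \mathcal G$ to which $(x_i)$ $F$-converges in $X$. By Definition \ref{spcpn}, this is exactly the assertion that $X$ is sequencewise $\mathcal F \cap \mathcal G$-compact for every $\mathcal G \in \Spec_I(\mathcal H)$, as required.

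There is no genuine obstacle in this argument; the only point demanding care is the bookkeeping in the quantifier rearrangement. In the forward direction one uses that for any $Y \in \mathcal H$ and any $(y_i)$ in $Y$ the set $Spec((y_i), Y)$ automatically belongs to $\Spec_I(\mathcal H)$; in the converse direction every $\mathcal G \in \Spec_I(\mathcal H)$ is, by definition, witnessed by some specific pair $(Y, (y_i))$ with $Y \in \mathcal H$, which is then fed back into the product condition. The first (non-relative) statement then drops out for free by setting $\mathcal H = \mathcal Fc$.
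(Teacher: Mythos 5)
Your proof is correct and follows essentially the same route as the paper's: reduce to the relative statement, unpack the definitions into a chain of equivalences, use the componentwise characterization of $F$-convergence in a product, and rearrange quantifiers via the definition of $\Spec_I(\mathcal H)$. The only difference is that you spell out the proof of the product-convergence fact, which the paper simply cites as easy and well known.
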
 

\begin{proof}
The first statement is the particular case of the second statement
when $\mathcal H = \mathcal Fc$.
To prove the second statement, note the following chain 
of equivalences.
  \begin{enumerate}    
\item 
$X$ is $\mathcal Fc$-Frol\'\i k for $\mathcal H$-spaces;
\item  
$X  \times Y $ is sequencewise $\mathcal F$-compact, 
for every $Y \in \mathcal H$;
\item  
for every $Y \in \mathcal H$
and every sequence $( z_i) _{i \in I} $ 
in $X  \times Y $, there is 
$F  \in \mathcal F$ 
such that $( z_i) _{i \in I} $
$F$-converges in 
$X  \times Y $;
\item  
for every $Y \in \mathcal H$
and every pair of sequences $( y_i) _{i \in I} $ 
in $Y $ and
$( x_i) _{i \in I} $ 
in $X $, there is 
$F  \in \mathcal F$ 
such that both $( y_i) _{i \in I} $
$F$-converges in 
$Y $
and $( x_i) _{i \in I} $
$F$-converges in 
$X$;
\item  
for every $Y \in \mathcal H$
and every pair of sequences $\mathbf y = ( y_i) _{i \in I} $ 
in $Y $ and
$( x_i) _{i \in I} $ 
in $X $, there is 
$F  \in \mathcal F \cap Spec(\mathbf y, Y)$ 
such that  $( x_i) _{i \in I} $
$F$-converges in 
$X$;
\item
$X$ is sequencewise $\mathcal F \cap Spec(\mathbf y, Y)$-\brfrt compact, 
for every $Y \in \mathcal H$ 
and every sequence $\mathbf y = ( y_i) _{i \in I} $ 
of elements of $Y$.
\item
$X$ is sequencewise 
$\mathcal F \cap \mathcal G$-\brfrt compact, 
for every  $\mathcal G \in  \Spec_I (\mathcal H)$.
  \end{enumerate}

The equivalence of (1) and (2) is the definition of $\mathcal Fc$-Frol\'\i kness for $\mathcal H$ spaces; 
(2) $\Leftrightarrow $  (3)  follows from the definition of
sequencewise $\mathcal F$-\brfrt compactness;
 (3) $\Leftrightarrow $  (4) 
follows from the easy and well-known fact that a
sequence in a product $F$-converges if and only if each component
of the sequence onto any factor $F$-converges; 
(4) $\Leftrightarrow $  (5) is from the definition of 
$Spec(\mathbf y, Y)$; 
(5) $\Leftrightarrow $  (6)
  follows from  the definition 
of sequencewise $\mathcal Q$-\brfrt compactness,
for $\mathcal Q = \mathcal F \cap Spec(\mathbf y,Y)$;
finally, (6) $\Leftrightarrow $   (7)
follows from the definition of   
$\Spec_I (\mathcal H)$.
 \end{proof}  
  
Proposition \ref{swpc}  
shows that the study of 
$\mathcal Fc$-Frol\'\i kness  
can be divided into two steps.
In  the  first step one has to  determine
$\Spec_I (\mathcal Fc)$,
that is, 
 all possible values for
$Spec(\mathbf y, Y)$,
$Y$ varying in 
$\mathcal Fc$,
and $\mathbf y$ varying among all $I$-indexed sequences on $Y$. 
In the 
second step one should  characterize  those spaces which are
sequencewise 
$\mathcal Q$-\brfrt compact
for all  $\mathcal Q$
having the form $\mathcal F \cap \mathcal G$, 
for some  $\mathcal G \in  \Spec_I (\mathcal Fc)$.

Each of the two steps might prove to be very difficult.

Concerning the first step, one could
notice that there are 
very little constraints on the values that $Spec(\mathbf y, Y)$
might assume, in general.
For simplicity, suppose that $\mathcal G$ is a set of ultrafilters 
over $I$, and that $\mathcal G$ contains all principal ultrafilters.
Give $I$ the discrete topology, let $ \beta (I)$
be its \v Cech-Stone compactification and,
as usual, identify $I$ with the set of all principal ultrafilters over $I$.
Thus, $I \subseteq \mathcal G \subseteq \beta (I)$.
Think of $\mathcal G$ as a subspace $Y$ of $\beta(I)$,
with the induced topology.
Letting   
$\mathbf y$ be the sequence which is the identity on $I$,
it is trivial to check that 
$Spec(\mathbf y, Y) = \mathcal G \cup \{ \mathrm P(I)  \} $.
Thus  $Spec(\mathbf y, Y)$ can be quite arbitrary
(of course, it necessarily contains all the principal ultrafilters,
as well as the improper filter).
On the other hand, elaborated constraints arise if we want
 $Y$ to be sequencewise $\mathcal F$-\brfrt compact,
for some $\mathcal F$, 
since, in order to fulfill this request, we need to check that  \emph{for every}
sequence, there is $F \in \mathcal F$  
such that the sequence $F$-converges.

As far as the second step is concerned,
the difficulties of studying
sequencewise $\mathcal F$-\brfrt compactness
have been hinted in \cite[Section 6]{sssr}. 

In any case, Proposition \ref{swpc}  shows
that the study of $\mathcal Fc$-Frol\'\i kness   
can be reduced to the study of 
simultaneous sequencewise $\mathcal Q$-\brfrt compactness,
for a (perhaps large number of) appropriate sets $\mathcal Q$. 
The above observation suggests the following definition.

\begin{definition} \labbel{tanti}
Suppose that  $\mathbf F = \{ \mathcal F_a \mid  a\in A\}$,
where each $\mathcal F_a$ is a family of filters over
some set $I_a$.
We say that a topological space $X$ is
\emph{$\mathbf F$-compact} if 
$X$ is sequencewise $\mathcal F_a$-\brfrt compact,
for every $a \in A$.
    
 The class of all $\mathbf F$-compact spaces
shall be denoted by $\mathbf Fc$. 
 \end{definition}   

$\mathbf F$-compactness is not necessarily equivalent to
sequencewise $\mathcal F$-\brfrt compactness, for some $\mathcal F$.
See \cite{notefc}.

Let us denote by
$(\mathcal K : \mathcal H)$
the class of all spaces which are $\mathcal K$-Frol\'\i k  
for $\mathcal H$-spaces.

\begin{theorem} \labbel{FQ}
For every  
$\mathbf F = \{ \mathcal F_a \mid  a\in A\}$ as above, there is
$\mathbf Q = \{ \mathcal Q_b \mid  b\in B\}$
such that 
$\mathfrak F(\mathbf Fc)= \mathbf Qc$.

More generally, for every  
$\mathbf F = \{ \mathcal F_a \mid  a\in A\}$
and every class $\mathcal H$ of topological spaces, there is
$\mathbf Q = \{ \mathcal Q_b \mid  b\in B\}$
such that 
$(\mathbf Fc : \mathcal H)= \mathbf Qc$.
\end{theorem}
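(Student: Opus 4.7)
The plan is to reduce the Frol\'\i kness condition for $\mathbf Fc$ to a conjunction, over $a \in A$, of Frol\'\i kness conditions for $\mathcal F_a c$, apply Proposition~\ref{swpc} once for each $a$, and then repackage all the resulting constraints as a single $\mathbf Q$-compactness statement in the sense of Definition~\ref{tanti}. Note that the first assertion is the special case $\mathcal H = \mathbf Fc$ of the second, so I only treat the latter.

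First, unfolding the definition of $(\mathbf Fc : \mathcal H)$ together with Definition~\ref{tanti}, a space $X$ lies in $(\mathbf Fc : \mathcal H)$ iff, for every $Y \in \mathcal H$ and every $a \in A$, the product $X \times Y$ is sequencewise $\mathcal F_a$-compact. Swapping these two universal quantifiers, this is equivalent to: for every $a \in A$, the space $X$ is $\mathcal F_a c$-Frol\'\i k for $\mathcal H$-spaces. Next, I apply the second assertion of Proposition~\ref{swpc} separately to each $a \in A$ (with $\mathcal F$ instantiated as $\mathcal F_a$ and $I$ as $I_a$). This converts each of the above conditions into: $X$ is sequencewise $\mathcal F_a \cap \mathcal G$-compact, for every $\mathcal G \in \Spec_{I_a}(\mathcal H)$. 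Combining, $X \in (\mathbf Fc : \mathcal H)$ iff for every $a \in A$ and every $\mathcal G \in \Spec_{I_a}(\mathcal H)$, the space $X$ is sequencewise $\mathcal F_a \cap \mathcal G$-compact.

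Finally, to recover the form prescribed by Definition~\ref{tanti}, I would set $B = \{(a,\mathcal G) \mid a \in A,\ \mathcal G \in \Spec_{I_a}(\mathcal H)\}$ and, for $b = (a,\mathcal G) \in B$, define $\mathcal Q_b = \mathcal F_a \cap \mathcal G$. Since $\mathcal F_a$ and every element of $\Spec_{I_a}(\mathcal H)$ consist of filters over the \emph{same} index set $I_a$, the family $\mathcal Q_b$ is indeed a family of filters over a single set, as Definition~\ref{tanti} requires; this bookkeeping about index sets is really the only point that needs care, since different values of $a$ may live over different $I_a$. With $\mathbf Q = \{\mathcal Q_b \mid b \in B\}$, the reformulation above reads exactly $X \in (\mathbf Fc : \mathcal H) \iff X \in \mathbf Qc$, completing the proof. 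No step is genuinely hard: the content is entirely contained in Proposition~\ref{swpc} and in the observation that universal quantifiers over $a \in A$ and over $Y \in \mathcal H$ can be swapped freely.
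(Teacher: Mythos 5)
Your proof is correct and follows essentially the same route as the paper's: decompose $(\mathbf Fc : \mathcal H)$ as the intersection over $a \in A$ of the classes $(\mathcal F_a c : \mathcal H)$ (your quantifier swap is the paper's ``obvious property of the binary operator''), apply the second statement of Proposition~\ref{swpc} to each $a$, and collect the resulting families into a single $\mathbf Q$. Your explicit description of $\mathbf Q$ indexed by pairs $(a,\mathcal G)$ with $\mathcal Q_{(a,\mathcal G)}=\mathcal F_a\cap\mathcal G$ is exactly what the paper records afterwards in Corollary~\ref{corFQ}.
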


 \begin{proof}
The first statement follows from the second one, by taking
$\mathcal H = \mathbf Fc$, since, by  the definitions, 
$(\mathbf Fc : \mathbf Fc)= \mathfrak F(\mathbf Fc)$.

Let us prove the rest of the theorem. Since, by definition, 
 $\mathbf Fc = \bigcap _{ a\in A}  \mathcal F_ac $,
we have that
$(\mathbf Fc : \mathcal H)
=
\bigcap _{ a\in A} ( \mathcal F_ac : \mathcal H)$,
 by an obvious property of the binary operator $( \text{-} : \text{-})$. 
By the last statement in Proposition \ref{swpc},
for every $a \in A$, 
there are a set $B_a$ 
 and families 
$(\mathcal Q_b) _{b \in B_a} $
such that 
a topological space  $ X$ belongs to 
$( \mathcal F_ac : \mathcal H)$ 
if and only if 
$X$ is  
sequencewise $\mathcal Q_b$-\brfrt compact,
for every  $b \in B_a$. 
Since 
$(\mathbf Fc : \mathcal H)
=
\bigcap _{ a\in A} ( \mathcal F_ac : \mathcal H)
$, we get that a topological space  $X$
belongs to 
$(\mathbf Fc : \mathcal H)$
if and only if $X$ is
$\mathbf Q$-compact,
for $\mathbf Q=  \{ \mathcal Q_b \mid b \in \bigcup _{a \in A}  B_a\}   $.  
 \end{proof}  

The proofs of Proposition \ref{swpc} and of Theorem \ref{FQ} 
give an explicit description of the $\mathbf Q $ given by \ref{FQ}.

\begin{corollary} \labbel{corFQ}
For every  
class $\mathcal H$ of topological spaces
and every 
$\mathbf F $ as in Definition \ref{tanti},
$(\mathbf Fc : \mathcal H)= \mathbf Qc$,
where
$\mathbf Q = \{ \mathcal F \cap \mathcal G \mid  
\mathcal F \in \mathbf F,\ \mathcal G \in \Spec_I (\mathcal H) \}$.

 In particular, the 
 $\mathbf Q $ given by Theorem \ref{FQ} 
can be chosen in such  a way that,
for every $\mathcal Q \in \mathbf Q $,
there is  $\mathcal F \in \mathbf F $
such that 
$\mathcal Q \subseteq \mathcal F$.
If this is the case, and every  
$\mathcal F \in \mathbf F $ is a family of ultrafilters, 
then every
$\mathcal Q \in \mathbf Q $ is a family of ultrafilters.
  \end{corollary}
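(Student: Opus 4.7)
The plan is to read off $\mathbf Q$ directly from the two preceding proofs and then verify the two additional properties by inspection. Since the corollary's explicit formula already appears implicitly inside the proof of Theorem \ref{FQ}, no new ideas are required; the work is bookkeeping.

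First, I would unpack the construction in the proof of Theorem~\ref{FQ}. Writing $\mathbf Fc = \bigcap_{a\in A}\mathcal F_a c$, one obtains $(\mathbf Fc:\mathcal H)=\bigcap_{a\in A}(\mathcal F_a c:\mathcal H)$ from the obvious property of the operator $(\text{-}:\text{-})$. Applying Proposition~\ref{swpc} to each factor (with the appropriate set $I_a$ on which $\mathcal F_a$ is a family of filters), membership $X\in(\mathcal F_a c:\mathcal H)$ is equivalent to $X$ being sequencewise $\mathcal F_a\cap\mathcal G$-compact for every $\mathcal G\in\Spec_{I_a}(\mathcal H)$. Taking the intersection over $a\in A$ and combining all these families into one indexing set, $X\in(\mathbf Fc:\mathcal H)$ iff $X$ is $\mathbf Q$-compact for
\[
\mathbf Q \;=\;\{\mathcal F_a\cap\mathcal G \mid a\in A,\ \mathcal G\in\Spec_{I_a}(\mathcal H)\},
\]
which is exactly the family described in the statement (with the symbol $I$ in the corollary understood as the set over which the chosen $\mathcal F\in\mathbf F$ lives).

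Second, the inclusion $\mathcal Q\subseteq\mathcal F$ asserted in the ``In particular'' clause is immediate from the defining formula: every $\mathcal Q\in\mathbf Q$ is by construction of the shape $\mathcal F\cap\mathcal G$ for some $\mathcal F\in\mathbf F$, so witnessing $\mathcal F$ is just the first factor. Finally, if every member of $\mathbf F$ is a family of ultrafilters, then each $\mathcal Q\in\mathbf Q$ is a subfamily of some such $\mathcal F$ and therefore also consists only of ultrafilters.

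There is no real obstacle in this argument; the only point that requires a small amount of care is the use of a different base set $I_a$ for each $\mathcal F_a\in\mathbf F$, which must be tracked when forming the disjoint union of the indexing sets $B_a$ produced by Proposition~\ref{swpc}. Once this is handled, the three assertions of the corollary fall out by direct inspection of the formulas.
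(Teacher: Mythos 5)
Your proposal is correct and follows essentially the same route as the paper, which justifies the corollary precisely by reading off the explicit $\mathbf Q$ from the proofs of Proposition \ref{swpc} and Theorem \ref{FQ}; your extra care about the varying base sets $I_a$ is a welcome clarification of the statement's slightly loose use of the single symbol $I$.
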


\begin{corollary} \labbel{flind}
(a) There is $\mathbf Q $ 
such that $\mathfrak F \text{(Lindel\"of)} = \mathbf Qc $.
  
(b) There is $\mathbf Q $ 
such that $\mathfrak F \text{(linearly Lindel\"of)} = \mathbf Qc $.  
 \end{corollary}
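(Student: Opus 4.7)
The plan is to express both Lindel\"ofness and linear Lindel\"ofness as $\mathbf Fc$ for appropriate $\mathbf F$ of the form in Definition \ref{tanti}, and then invoke Theorem \ref{FQ}. The essential ingredient is the classical Aleksandrov--Urysohn reformulation: a space is $[\kappa,\lambda]$-compact if and only if every sequence $(x_i)_{i\in I}$ with $\kappa\le |I|\le \lambda$ admits a \emph{complete accumulation point}, i.e., a point $x\in X$ such that $|\{i\in I:x_i\in U\}|=|I|$ for every neighborhood $U$ of $x$ (see \cite{St,Va}). In the language of Definition \ref{spcpn}, $x$ is a complete accumulation point of $(x_i)_{i\in I}$ exactly when $(x_i)$ $F$-converges to $x$ for some $|I|$-\emph{uniform} filter $F$ on $I$, where a filter is $|I|$-uniform if each of its members has cardinality $|I|$.

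For (a), Lindel\"ofness coincides with $[\aleph_1,\infty]$-compactness, hence with the requirement that every sequence $(x_i)_{i\in I}$ with $|I|\ge \aleph_1$ has a complete accumulation point. Letting $\mathcal F_I$ be the family of $|I|$-uniform filters on $I$ for each $I$ with $|I|\ge \aleph_1$, and setting $\mathbf F^{\mathrm{Lind}}=\{\mathcal F_I\mid |I|\ge \aleph_1\}$, the reformulation gives that Lindel\"of equals $\mathbf F^{\mathrm{Lind}}c$; Theorem \ref{FQ} then supplies the required $\mathbf Q$.

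For (b), linear Lindel\"ofness is classically equivalent to the statement that every sequence $(x_\alpha)_{\alpha<\kappa}$ with $\kappa$ an uncountable regular cardinal admits a complete accumulation point (a chain-cover of singular length reduces to a chain of length its cofinality). For each uncountable regular $\kappa$, let $\mathcal F_\kappa$ be the family of $\kappa$-uniform filters on $\kappa$, and let $\mathbf F^{\mathrm{lL}}=\{\mathcal F_\kappa\mid \kappa\text{ uncountable regular}\}$. Then linearly Lindel\"of equals $\mathbf F^{\mathrm{lL}}c$, and Theorem \ref{FQ} again yields the required $\mathbf Q$.

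The substantive step is thus the classical translation of the two covering properties into filter-convergence language; once this is in place, the corollary follows immediately from Theorem \ref{FQ}, and Corollary \ref{corFQ} even furnishes an explicit description of $\mathbf Q$. Since the Aleksandrov--Urysohn argument and its chain-cover variant require no separation axioms, this approach respects the paper's blanket conventions.
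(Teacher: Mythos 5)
Your overall strategy is the same as the paper's: write each covering property as $\mathbf Fc$ for a suitable $\mathbf F$ and then apply Theorem \ref{FQ}. Part (b) is correct and essentially coincides with what the paper does, since for \emph{regular} uncountable $\kappa$ the equivalence between $[\kappa,\kappa]$-compactness, the existence of complete accumulation points for $\kappa$-indexed sequences, and convergence along some uniform (ultra)filter on $\kappa$ is a sound classical fact. The problem is in part (a): the ``Aleksandrov--Urysohn reformulation'' you invoke is false in the generality you need. The implication from $[\kappa,\lambda]$-compactness to the existence of complete accumulation points breaks down at index sets of singular cardinality whose cofinality is below $\kappa$, because a union of $<\kappa$ many sets each of size $<|I|$ can then still have size $|I|$. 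Concretely, the topological sum $X=\bigoplus_{n<\omega}(\omega_n+1)$ is $\sigma$-compact, hence Lindel\"of; but if $A\subseteq X$ picks $\aleph_n$ points from the $n$-th summand, then $|A|=\aleph_\omega$ and every point of $X$ has a clopen neighborhood (its own summand) meeting $A$ in fewer than $\aleph_\omega$ points, so $A$ has no complete accumulation point and no injective enumeration of $A$ converges along any $\aleph_\omega$-uniform filter. Hence your class $\mathbf F^{\mathrm{Lind}}c$ is a \emph{proper} subclass of the Lindel\"of spaces, the identity ``Lindel\"of $=\mathbf F^{\mathrm{Lind}}c$'' fails, and Theorem \ref{FQ} applied to your $\mathbf F^{\mathrm{Lind}}$ computes the Frol\'\i k class of the wrong property.

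This is exactly the point where the paper is more careful: it decomposes Lindel\"ofness as the conjunction of $[\lambda,\lambda]$-compactness over \emph{all} uncountable $\lambda$, including the singular ones, and then appeals to the Ginsburg--Saks--Caicedo theorem, which represents each $[\mu,\lambda]$-compactness as sequencewise $\mathcal F$-compactness for an appropriate $\mathcal F$ living on a different index set (and not simply the uniform ultrafilters on $\lambda$). That cited representation is valid for singular $\lambda$ as well, which is precisely what your complete-accumulation-point translation cannot deliver; it is also precisely the singular cardinals that separate Lindel\"of from linearly Lindel\"of, which is why your argument for (b) survives while the one for (a) does not. To repair (a) you should replace the complete-accumulation-point step by the Ginsburg--Saks/Saks/Caicedo characterization (or some other filter-theoretic rendering of $[\lambda,\lambda]$-compactness that works at singular $\lambda$).
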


 \begin{proof}
(a) The Lindel\"of property can be expressed as the conjunction
of $[ \omega _1, \lambda ]$-\brfrt compactness,  for every cardinal $\lambda  > \omega$,
equivalently, as the conjunction of 
$[ \lambda , \lambda ]$-\brfrt compactness  for every cardinal $\lambda > \omega $.
Recall that a 
space is \emph{$[ \mu, \lambda ]$-\brfrt compact}  
if  every  open cover by at most $\lambda$ sets
has a subcover with $<\mu $ sets.

Ginsburg and Saks \cite{GS}, Saks \cite{Sa} and, in full generality,
 Caicedo \cite{Ca}   showed that $[ \mu, \lambda ]$-\brfrt compactness
is equivalent, in the present terminology, to
sequencewise $\mathcal F$-\brfrt compactness, for an appropriate
$\mathcal F$. 
 The corollary is now immediate from 
Theorem \ref{FQ}.
 
(b) A space is linearly Lindel\"of 
if and only if it is
$[ \lambda , \lambda ]$-\brfrt compact,  for every 
\emph{regular} 
cardinal $\lambda > \omega $.
This can be taken as the definition of linear 
Lindel\"ofness.
The same arguments as in (a)
give the result.
\end{proof}

The families  given by 
Corollary \ref{flind} 
might possibly be  proper classes.
This can be dealt with the usual methods
and causes no set-theoretical problem.
Notice that Corollary \ref{corFQ} 
shows that, in both cases, the families 
$\mathbf Q $ given by 
Corollary \ref{flind}
can be chosen to consist only of ultrafilters.

It is probably interesting, and perhaps useful,
to exemplify Proposition \ref{swpc}  
and Theorem \ref{FQ} 
in other particular cases.
We have not yet worked out any detail.

The above arguments carry over without 
 essential modifications
in order to deal with
pseudocompact-like notions.
Just take into account everywhere sequences of 
nonempty open sets and their $F$-limit points,
in place of sequences of elements and $F$-convergence. 

\begin{definition} \labbel{ps}
Again, suppose that $X$ is a topological space,
$F$ is a filter over some set $I$
and $\mathcal F$ is a family of filters over $I$.

A point  $x$ of $X$ is said to be an
\emph{$F$-limit point} of  a sequence 
$( X_i) _{i \in I} $ of subsets of 
$X$ 
if $\{ i \in I \mid  X_i \cap  U  \not= \emptyset    \} \in D$,
for every neighborhood $U$ of $x$ in $X$. 

The topological space $X$  is
\emph{sequencewise $\mathcal F$-\brfrt pseudocompact} 
\cite{sssr} if for every sequence 
$( O_i) _{i \in I} $ 
of nonempty open subsets of $X$
there is 
$F \in \mathcal F$ 
such that  
$( O_i) _{i \in I} $ 
has an $F$-limit  point in $X$.
See again
\cite{GFK,sssr} for related notions and further references. 
The class of all 
sequencewise $\mathcal F$-\brfrt pseudocompact 
spaces shall be denoted by 
$\mathcal Fp$.

If $Y$ is a topological space and $\mathbf O = ( O_i) _{i \in I} $ 
is a sequence of subsets of $Y$,
 the \emph{pseudospectrum  of $\mathbf O $ in $Y$}, in symbols,
$PSpec(\mathbf O, Y)$, is the set of all filters $F$ over $I$ 
such that $\mathbf O$ has an  $F$-limit point  in $Y$. 

If $\mathcal H$ is a class of topological spaces, 
the \emph{pseudospectrum of $\mathcal H$ (relative to $I$)}
is the set
$\PSpec_I (\mathcal H )=
\{PSpec(\mathbf O, Y)  \mid Y \in \mathcal H 
\text{ and } \allowbreak \mathbf O  \allowbreak \text{ is an } \allowbreak 
I\text{-}\allowbreak\text{index}\allowbreak\text{ed}
\allowbreak \text{ sequence
of nonempty open subsets  of } Y \}$.  
 
If $\mathbf F = \{ \mathcal F_a \mid  a\in A\}$
and each $\mathcal F_a$ is a family of filters over
some set $I_a$,
we say that a topological space $X$ is
\emph{$\mathbf F$-pseudocompact} if 
$X$ is sequencewise $\mathcal F_a$-\brfrt pseudocompact,
for every $a \in A$.    

 The class of all $\mathbf F$-pseudocompact spaces
shall be denoted by $\mathbf Fp$. 
\end{definition}

\begin{theorem} \labbel{prop} 
A 
space $X$ is $\mathcal Fp$-Frol\'\i k
for $\mathcal H$-spaces  
if and only if $X$ is sequencewise 
$\mathcal F \cap \mathcal G$-\brfrt pseudocompact, 
for every  $\mathcal G \in  \PSpec_I \mathcal H$.

More generally, 
for every  
class $\mathcal H$ of topological spaces
and every 
$\mathbf F $ as in Definition \ref{tanti},
$(\mathbf Fp : \mathcal H)= \mathbf Qp$,
where
$\mathbf Q = \{ \mathcal F \cap \mathcal G \mid  
\mathcal F \in \mathbf F,\ \mathcal G \in \PSpec_I (\mathcal H) \}$.

\end{theorem}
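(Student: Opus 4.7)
The plan is to mimic, step by step, the proofs of Proposition \ref{swpc} and Theorem \ref{FQ}, with the dictionary \emph{sequences of points} $\leadsto$ \emph{sequences of nonempty open sets}, \emph{$F$-convergence} $\leadsto$ \emph{having an $F$-limit point}, and $\Spec$ $\leadsto$ $\PSpec$. As in Theorem \ref{FQ}, I would first observe that the general statement for $\mathbf F$ follows from the single-family statement together with the obvious identity $(\mathbf Fp : \mathcal H) = \bigcap_{a \in A} (\mathcal F_a p : \mathcal H)$ and the definition of $\mathbf Q$-pseudocompactness. So it is enough to establish the first assertion.

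For the first assertion I would set up a chain of equivalences exactly analogous to (1)--(7) in the proof of Proposition \ref{swpc}. Writing out the relevant version: (1) $X$ is $\mathcal F p$-Frol\'\i k for $\mathcal H$-spaces; (2) $X\times Y$ is sequencewise $\mathcal F$-pseudocompact, for every $Y \in \mathcal H$; (3) for every $Y \in \mathcal H$ and every sequence $(W_i)_{i \in I}$ of nonempty open sets in $X \times Y$, some $F \in \mathcal F$ has $(W_i)$ with an $F$-limit point in $X \times Y$; (4) the same as (3) but with $W_i = O_i^X \times O_i^Y$ a sequence of nonempty open rectangles, and with the $F$-limit point condition split as: there exist $x \in X$, $y \in Y$ such that $x$ is an $F$-limit of $(O_i^X)$ in $X$ and $y$ is an $F$-limit of $(O_i^Y)$ in $Y$; (5)--(7) then repeat the $Spec$-to-$\PSpec$ re\-formu\-lation verbatim, using $\mathbf O = (O_i^Y)$ and $\PSpec_I(\mathcal H)$ in place of $Spec(\mathbf y,Y)$ and $\Spec_I(\mathcal H)$.

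The only step that is not entirely cosmetic is (3) $\Leftrightarrow$ (4), which replaces the completely trivial product-convergence fact used in Proposition \ref{swpc}. I would handle it in two moves. First, passing from an arbitrary sequence $(W_i)$ to rectangles: inside each nonempty open $W_i \subseteq X \times Y$ choose a nonempty basic rectangle $O_i^X \times O_i^Y \subseteq W_i$; any $F$-limit point of $(O_i^X \times O_i^Y)$ is automatically an $F$-limit point of $(W_i)$, because $O_i^X \times O_i^Y \cap U \neq \emptyset$ implies $W_i \cap U \neq \emptyset$ for every neighborhood $U$. Rectangles are themselves a special case, so the two forms of (3) are equivalent. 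Second, for rectangles, using a basic neighborhood $U \times V$ of $(x,y)$ one has
\[
\{i : (O_i^X \times O_i^Y) \cap (U \times V) \neq \emptyset\}
=
\{i : O_i^X \cap U \neq \emptyset\} \cap \{i : O_i^Y \cap V \neq \emptyset\};
\]
since $F$ is a filter, this set lies in $F$ for all basic neighborhoods of $(x,y)$ iff both intersection sets lie in $F$ for all neighborhoods of $x$ and $y$ separately. Hence $(x,y)$ is an $F$-limit point of the rectangle sequence iff $x$ is an $F$-limit point of $(O_i^X)$ and $y$ is an $F$-limit point of $(O_i^Y)$, which is precisely the rectangle-version of (4).

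The hard part, if any, is really just this rectangle reduction; once it is in place the remaining equivalences (4)--(7) are word-for-word rewrites of the corresponding lines in the proof of Proposition \ref{swpc}, and the deduction of the second (``more generally'') assertion mirrors the derivation of Corollary \ref{corFQ} from Theorem \ref{FQ}. So I expect no real obstacle beyond bookkeeping.
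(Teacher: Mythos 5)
Your proposal is correct and is exactly the argument the paper intends: the paper gives no separate proof of Theorem \ref{prop}, stating only that the proofs of Proposition \ref{swpc} and Theorem \ref{FQ} ``carry over without essential modifications'' under the dictionary you describe. You have correctly isolated and verified the one step that is genuinely new rather than cosmetic, namely the reduction of an arbitrary sequence of nonempty open subsets of $X\times Y$ to open rectangles and the splitting of $F$-limit points componentwise via the filter property.
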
  

One needs very little structure, in order to define generalized
Frol\'\i k  classes. In fact, a binary operation suffices
and, in the case of semigroups, the Frol\'\i k  operator satisfies some
reasonable properties.

\begin{definition} \labbel{frolbin}
Suppose that $\mathcal C$  is a class and a binary operation
$\times$ is defined on  $\mathcal C$.
If $\mathcal K, \mathcal H  \subseteq \mathcal C$, 
we let 
$(\mathcal K : \mathcal H)=
\{ X \in \mathcal C \mid X \times Y \in \mathcal K,
\text{ for every } Y \in  \mathcal H \} $.
This is sometimes called the \emph{residuation operation}.
We let $\mathfrak F(\mathcal K) = (\mathcal K : \mathcal K)$.

Explicit reference to $\mathcal C$  and $\times$
shall not be indicated; however, as we shall show,
the above notions are highly dependent on the choice of 
 $\mathcal C$.
 \end{definition}   

The next proposition lists some very easy properties
of the operator $\mathfrak F$.
Let us say that a class $\mathcal K \subseteq \mathcal C$ 
is \emph{factor closed} if, for every $X, Y \in \mathcal C$, we have that
$X \times Y \in \mathcal K$ implies $X \in \mathcal K$.
In the particular case of topological spaces, 
if $\mathcal K$ is  closed under surjective images
(and does not contain the empty space),
then $\mathcal K$ is factor closed.

\begin{proposition} \labbel{easy} 
Under the assumptions and notations in Definition \ref{frolbin},
the following hold.
   \begin{enumerate}[(a)]
    \item 
$ \mathfrak F(\mathcal K) \supseteq \mathcal K$
if and only if $\mathcal K$ is closed under $\times$.  
\item 
If $\mathcal K \not= \emptyset $ and $\mathcal K$ is factor closed, 
then
 $\mathfrak F(\mathcal K) \subseteq \mathcal K   $.
\item
If  $\times$ is associative, then
$\mathfrak F(\mathcal K)$ is closed under $\times$ 
and $\mathfrak F(\mathfrak F(\mathcal K) ) \supseteq  \mathfrak F( \mathcal K )  $.
\item
If 
$\mathcal C$ has an identity element, then
 $\mathfrak F(\mathfrak F(\mathcal K) ) \subseteq  \mathfrak F( \mathcal K )  $.
  \end{enumerate}
\end{proposition}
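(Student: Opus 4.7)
The plan is to verify each of the four items directly from the definitions of $\mathfrak F$ and of the residuation operation, so nothing beyond unwinding membership will be needed. I would handle the items in the order (a), (b), (c), (d), since (c) and (d) each use a small observation that is easier to isolate after (a) and (b) are settled.

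For (a), I would simply expand both sides. The hypothesis $\mathcal K \subseteq \mathfrak F(\mathcal K)$ says that every $X \in \mathcal K$ satisfies $X \times Y \in \mathcal K$ for all $Y \in \mathcal K$, which is exactly closure under $\times$; conversely, closure under $\times$ immediately gives $X \in \mathfrak F(\mathcal K)$ whenever $X \in \mathcal K$. Item (b) is a one-line argument: pick any $Y_0 \in \mathcal K$ (using $\mathcal K \neq \emptyset$); for $X \in \mathfrak F(\mathcal K)$ we have $X \times Y_0 \in \mathcal K$, and factor closedness forces $X \in \mathcal K$.

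For (c), associativity is the key. Given $X_1, X_2 \in \mathfrak F(\mathcal K)$ and any $Y \in \mathcal K$, I would rewrite $(X_1 \times X_2) \times Y = X_1 \times (X_2 \times Y)$; the inner product is in $\mathcal K$ because $X_2 \in \mathfrak F(\mathcal K)$, and then the outer product is in $\mathcal K$ because $X_1 \in \mathfrak F(\mathcal K)$. Hence $\mathfrak F(\mathcal K)$ is closed under $\times$, and the inclusion $\mathfrak F(\mathfrak F(\mathcal K)) \supseteq \mathfrak F(\mathcal K)$ then follows by applying (a) to $\mathfrak F(\mathcal K)$ in place of $\mathcal K$.

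For (d), the small preliminary observation is that the identity $E$ of $\mathcal C$ lies in $\mathfrak F(\mathcal K)$, because $E \times Y = Y \in \mathcal K$ for all $Y \in \mathcal K$. Given this, if $X \in \mathfrak F(\mathfrak F(\mathcal K))$, then $X \times Z \in \mathfrak F(\mathcal K)$ for every $Z \in \mathfrak F(\mathcal K)$; specialising to $Z = E$ yields $X = X \times E \in \mathfrak F(\mathcal K)$, which is the desired inclusion. There is no real obstacle in any part; the only thing to be careful about is noting in (d) that an identity element for a binary operation acts on both sides (as is implicit in the term ``identity''), so that $X \times E = X$ is legitimate.
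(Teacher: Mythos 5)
Your proof is correct and follows essentially the same route as the paper: (c) is obtained by checking closure of $\mathfrak F(\mathcal K)$ under $\times$ via associativity and then invoking (a), and (d) by observing $E \in \mathfrak F(\mathcal K)$ and specialising to $Z = E$. The paper leaves (a) and (b) to the reader, and your one-line arguments for them are exactly what is intended.
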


  \begin{proof}
We leave (a) and (b) to the reader.

(c) It is trivial to see that 
if $\times$ is associative, then
$ \mathfrak F( \mathcal K )$ is closed under $\times$, hence
(a) with
$ \mathfrak F( \mathcal K )$ in place of $\mathcal K$  
gives $\mathfrak F(\mathfrak F(\mathcal K) ) \supseteq 
 \mathfrak F( \mathcal K )  $.

(d) Trivially the identity element $E$ 
belongs to $ \mathfrak F(\mathcal K)$.
Hence if
$Z \in  \mathfrak F(\mathfrak F( \mathcal K ))$
then
 $Z =Z \times E \in \mathfrak F(\mathcal K)$.
 \end{proof}

Strictly speaking, the class of topological spaces 
has not an identity element with respect to $\times$.
However, considering the equivalence classes  modulo 
homeomorphisms, the one-element space is indeed an identity
element. Moreover, the product operation
is associative among equivalence classes. Thus if 
$\mathcal K$ is a class of topological spaces and $\mathcal K$
is closed under homeomorphisms, then
from Proposition \ref{easy}(c)(d) we get
$\mathfrak F(\mathfrak F(\mathcal K) )= \mathfrak F( \mathcal K )  $.
 
The operator $\mathfrak F$ is highly dependent on 
the class $\mathcal C$. 
Take $\mathcal C$ to be the class of all topological spaces 
with at least two elements and let 
$\mathcal K$ be the class of those spaces 
which have cardinality in 
the set $\{ 2, 2^5 \} \cup \{ 2 ^{9+2h} \mid h \in \omega  \} $.
Then, relative to $\mathcal C$, the classes, respectively, 
 $\mathfrak F(\mathcal K) $,  
$\mathfrak F(\mathfrak F( \mathcal K )) $, and  
$\mathfrak F(\mathfrak F(\mathfrak F( \mathcal K ))) $  
are the classes of spaces with cardinality in, respectively,
$\{  2^4 \} \cup \{ 2 ^{8+2h} \mid h \in \omega  \} $,
$\{ 2 ^{4+2h} \mid h \in \omega  \} $ and 
$ \{ 2 ^{2+2h} \mid h \in \omega  \} $.
Thus
$\mathfrak F(\mathcal K) \subset
\mathfrak F(\mathfrak F( \mathcal K )) \subset
\mathfrak F(\mathfrak F(\mathfrak F( \mathcal K ))) $, where
the inclusions are strict.
On the other hand, relative to the class of all topological spaces,
we have showed that
$\mathfrak F(\mathcal K) ) =
\mathfrak F(\mathfrak F( \mathcal K )) $.

\begin{disclaimer*}
The author has done his best efforts to compile the following
list of references in the most accurate way;
however he cannot exclude the possibility of
 inaccuracies.
Henceforth the author  strongly discourages the use 
of indicators extracted from the list in decisions about individuals, attributions of funds, selections or evaluations of research projects, etc.
A more detailed disclaimer can be found at the author's web page.
\end{disclaimer*}

\end{document}